\documentclass[11pt]{article}
\usepackage[a4paper,margin=1in]{geometry}
\usepackage[T1]{fontenc}
\usepackage[utf8]{inputenc}
\usepackage{lmodern}
\usepackage{microtype}
\usepackage{amsmath,amssymb,amsthm,mathtools}
\usepackage{xcolor}
\usepackage{graphicx}
\usepackage{tikz}
\usetikzlibrary{arrows.meta,positioning,calc,fit,backgrounds,patterns,shapes.geometric}
\usepackage[font=small,labelfont=bf]{caption}
\usepackage{subcaption}
\usepackage{enumitem}
\usepackage{float}
\usepackage[hidelinks,pdfusetitle]{hyperref}

\newcommand{\AuthorName}{Vadim E. Levit and Eugen Mandrescu}
\newcommand{\AuthorOne}{Vadim E. Levit}
\newcommand{\AffiliationOne}{Department of Mathematics\\ Ariel University, Israel}
\newcommand{\EmailOne}{levitv@ariel.ac.il}
\newcommand{\AuthorTwo}{Eugen Mandrescu}
\newcommand{\AffiliationTwo}{Department of Computer Science\\ Holon Institute of Technology, Israel}
\newcommand{\EmailTwo}{eugen\_m@hit.ac.il}

\hypersetup{
  pdftitle={The family of all local maximum independent sets is an augmentoid},
  pdfsubject={On Problem 5.5 of Levit and Mandrescu},
  pdfkeywords={augmentoid, local maximum independent set, crown, greedoid, graph theory},
  pdfauthor={\AuthorName}
}

\newtheorem{theorem}{Theorem}[section]
\newtheorem{lemma}[theorem]{Lemma}

\newtheorem{proposition}[theorem]{Proposition}
\newtheorem{corollary}[theorem]{Corollary}
\newtheorem{example}[theorem]{Example}
\newtheorem{problem}[theorem]{Problem}

\newcommand{\Crown}{\mathrm{Crown}}
\newcommand{\CritIndep}{\mathrm{CritIndep}}

\newcommand{\core}{\mathrm{core}}
\newcommand{\corona}{\mathrm{corona}}
\newcommand{\diff}{d}

\tikzset{
  conceptbox/.style={rounded corners=2.5pt, draw=black, line width=0.8pt,
    fill=black!3, align=center, inner sep=5pt, minimum width=3.2cm, minimum height=1.3cm},
  conceptnote/.style={align=center, font=\footnotesize},
  conceptarrow/.style={-{Latex[length=2.2mm]}, line width=0.9pt},
  vertex/.style={circle, draw=black, line width=0.9pt, fill=white,
    minimum size=7.2mm, inner sep=0pt, font=\small},
  sonly/.style={vertex, fill=black!15},
  tonly/.style={vertex, double=black, double distance=0.7pt},
  bothset/.style={vertex, fill=black!70, text=white},
  added/.style={vertex, fill=black!35},
  focus/.style={draw=black!45, rounded corners=5pt, fill=black!5,
    inner xsep=5pt, inner ysep=4pt},
  edge/.style={line width=0.95pt, shorten >=3.2pt, shorten <=3.2pt, line cap=round}
}

\title{The family of all local maximum independent sets is an augmentoid}
\author{%
  \begin{tabular}[t]{@{}c@{\hspace{3em}}c@{}}
    \begin{tabular}[t]{@{}c@{}}
      \AuthorOne\\[2pt]
      \small \AffiliationOne\\
      \small\texttt{\EmailOne}
    \end{tabular}
    &
    \begin{tabular}[t]{@{}c@{}}
      \AuthorTwo\\[2pt]
      \small \AffiliationTwo\\
      \small\texttt{\EmailTwo}
    \end{tabular}
  \end{tabular}%
}
\date{}

\begin{document}
\maketitle

\begin{abstract}
It was proved in \cite{LM2022} that both $(V(G),\Crown(G))$ and $(V(G),\CritIndep(G))$ are augmentoids, established partial augmentation phenomena for the family $\Psi(G)$ of local maximum independent sets, and asked in Problem~5.5 to characterize the graphs whose family $\Psi(G)$ is an augmentoid. We prove that the answer is positive in full generality: for every finite simple graph $G$, the set system $(V(G),\Psi(G))$ is an augmentoid. The proof is constructive. If $S,T\in\Psi(G)$, then the explicit choice
\[
A=S \setminus N[T],\qquad B=T \setminus N[S]
\]
satisfies
\[
T\cup A\in\Psi(G),\qquad S\cup B\in\Psi(G),\qquad |T\cup A|=|S\cup B|.
\]
As a structural consequence, for every fixed $S\in\Psi(G)$ the map $T\mapsto S\cup T$ induces a canonical bijection from $\Psi(G-N[S])$ onto the members of $\Psi(G)$ containing $S$, and
\[
\alpha(G)=|S|+\alpha(G-N[S]).
\]
This decomposition also yields explicit formulas for the intersection and the union of all the maximum independent sets extending $S$, together with counting formulas for the local maximum and maximum independent sets containing $S$. We also add a short visual guide to the framework $\CritIndep(G)\subseteq\Crown(G)\subseteq\Psi(G)$ and end with several natural follow-up problems suggested by the theorem.
\end{abstract}

\noindent\textbf{Keywords.} augmentoid; local maximum independent set; crown; critical independent set; greedoid.

\smallskip
\noindent\textbf{MSC (2020).} Primary 05C69; Secondary 05B35, 05C70.

\section{Introduction}
Throughout, $G$ is a finite simple graph with vertex set $V(G)$ and edge set $E(G)$. For $X\subseteq V(G)$, the subgraph induced by $X$ is denoted by $G[X]$, while $N(X)$ and $N[X]=X\cup N(X)$ stand for the open and closed neighborhoods of $X$, respectively. A set $S\subseteq V(G)$ is \emph{independent} (or \emph{stable}) if no two vertices of $S$ are adjacent. We write $\alpha(G)$ for the maximum cardinality of an independent set in $G$ and $\Omega(G)$ for the family of all maximum independent sets.

A matching in $G$ is a set of pairwise disjoint edges; as usual, $\mu(G)$ denotes the size of a maximum matching. Recall that $G$ is a \emph{K\H{o}nig--Egerv\'ary graph} whenever
\[
\alpha(G)+\mu(G)=|V(G)|.
\]
We also use the following notations
\[
\core(G):=\bigcap \Omega(G),\qquad \corona(G):=\bigcup \Omega(G)
\]
for the core and corona of $G$.
For a set $X\subseteq V(G)$, put $\diff(X)=|X|-|N(X)|$. An independent set $S$ is \emph{critical} if
\[
\diff(S)=d(G), \text{ where } d(G)=\max\{\diff(I): I\text{ is an independent set of }G\}.
\]
We denote the family of all critical independent sets by $\CritIndep(G)$. Following~\cite{LM2022}, an independent set $S$ is a \emph{crown} if there exists a matching from $N(S)$ into $S$; the family of all crowns is denoted by $\Crown(G)$.

A set $S\subseteq V(G)$ is a \emph{local maximum independent set} if it is a maximum independent set of the induced subgraph $G[N[S]]$. The family of all such sets is written as $\Psi(G)$. The importance of $\Psi(G)$ goes back to the Nemhauser--Trotter theorem~\cite{NT1975}: every member of $\Psi(G)$ can be extended to a member of $\Omega(G)$. Levit and Mandrescu initiated the systematic study of these families in~\cite{LM2002}, where they proved that $\Psi(T)$ is a greedoid for every forest $T$. They then analyzed further graph classes in which the local maximum independent sets form greedoids, including bipartite graphs with uniquely restricted maximum matchings~\cite{LM2003}, triangle-free graphs with uniquely restricted maximum matchings~\cite{LM2007}, well-covered graphs~\cite{LM2008wc}, very well-covered graphs~\cite{LM2011}, and several graph operations preserving greedoid structure~\cite{LM2010}. A broader structural study culminated in the general criterion that if $\Psi(G)$ satisfies accessibility, then it is in fact an interval greedoid~\cite{LM2008ig,LM2012}.

The 2022 paper~\cite{LM2022} places these ideas into a larger framework built from critical independent sets, crowns, and local maximum independent sets. In particular, the following inclusions were shown:
\[
\CritIndep(G)\subseteq\Crown(G)\subseteq\Psi(G).
\]
Moreover, it was proved that $(V(G),\CritIndep(G))$ and $(V(G),\Crown(G))$ are augmentoids, and two partial augmentation results for $\Psi(G)$ were established: one for independent unions of disjoint members of $\Psi(G)$ and one for pairs with nested closed neighborhoods~\cite[Proposition~3.20, Theorem~5.3, Corollary~5.4]{LM2022}. The same paper also established several equality results that are directly relevant here. Namely,
\begin{align*}
\Crown(G)=\Psi(G)
&\Longleftrightarrow G[N[S]]\text{ is a K\H{o}nig--Egerv\'ary graph}\\
&\qquad\text{for every }S\in\Psi(G),
\end{align*}
while
\begin{align*}
\CritIndep(G)=\Crown(G)=\Psi(G)
&\Longleftrightarrow G[N[S]]\text{ is a K\H{o}nig--Egerv\'ary graph}\\
&\qquad\text{with a perfect matching for every }S\in\Psi(G)
\end{align*}
by~\cite[Proposition~4.1 and Theorem~4.13]{LM2022}. In particular, $\Crown(G)=\Psi(G)$ for bipartite graphs and for very well-covered graphs~\cite[Corollary~4.2 and Proposition~4.4]{LM2022}, while $\CritIndep(G)=\Crown(G)$ was already proved in~\cite[Proposition~4.7]{LM2022} for every K\H{o}nig--Egerv\'ary graph with a perfect matching. The same paper also records the implication
\[
\CritIndep(G)=\Crown(G)\Longrightarrow d(G)=0,
\]
and formulates as Problem~5.3 the task of characterizing the graphs satisfying $\CritIndep(G)=\Crown(G)$~\cite[p.~494]{LM2022}. Special cases obtained there include the bipartite characterization
\[
\CritIndep(G)=\Crown(G)=\Psi(G)
\quad\text{if and only if}\quad
G\text{ has a perfect matching},
\]
and, for trees of order at least two, the equivalences
\begin{align*}
\CritIndep(T)=\Crown(T)
&\Longleftrightarrow \CritIndep(T)=\Psi(T)\\
&\Longleftrightarrow d(T)=0\\
&\Longleftrightarrow T\text{ has a perfect matching}
\end{align*}
from~\cite[Corollary~4.9 and Corollary~4.12]{LM2022}. Very recently, Pereyra~\cite[Theorem~3.6]{Pereyra2026} resolved exactly Problem~5.3 by proving the converse implication, and hence
\[
\CritIndep(G)=\Crown(G)\quad\text{if and only if}\quad d(G)=0.
\]
He also gave additional reformulations of $\Crown(G)=\Psi(G)$ and $\CritIndep(G)=\Crown(G)=\Psi(G)$~\cite[Theorems~3.13 and~3.14]{Pereyra2026}. This motivates the following.

\begin{problem}
\cite[Problem~5.5]{LM2022}\quad Characterize graphs whose families of local maximum independent sets are augmentoids.
\end{problem}

Following~\cite{LM2022}, a pair $(E,\mathcal F)$ with nonempty $\mathcal F\subseteq 2^E$ is called an \emph{augmentoid} if for every $X,Y\in\mathcal F$ there exist sets $A\subseteq X \setminus Y$ and $B\subseteq Y \setminus X$ such that
\[
Y\cup A\in\mathcal F,\qquad X\cup B\in\mathcal F,\qquad |Y\cup A|=|X\cup B|.
\]
The purpose of the present note is to show that no restriction on $G$ is needed.

\begin{theorem}\label{thm:main}
For every graph $G$, the set system $(V(G),\Psi(G))$ is an augmentoid.
\end{theorem}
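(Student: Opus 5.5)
The plan is to verify directly that the explicit sets $A=S\setminus N[T]$ and $B=T\setminus N[S]$ do the job. Both clearly satisfy $A\subseteq S\setminus T$ and $B\subseteq T\setminus S$, since $T\subseteq N[T]$ and $S\subseteq N[S]$. Moreover $T\cup A$ is independent: $A$ avoids $N[T]$, so no vertex of $A$ is adjacent to a vertex of $T$. Symmetrically, $S\cup B$ is independent.

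Rather than proving $T\cup A\in\Psi(G)$ and $S\cup B\in\Psi(G)$ separately and then comparing sizes, I would prove one stronger statement:
\[
\alpha\bigl(G[N[S]\cup N[T]]\bigr)=|T|+|A|.
\]
Since $N[T\cup A]\subseteq N[S]\cup N[T]$ and $T\cup A$ is independent and contained in $N[T\cup A]$, this identity shows that $T\cup A$ is maximum in $G[N[T\cup A]]$, i.e.\ $T\cup A\in\Psi(G)$. By symmetry we also get $\alpha(G[N[S]\cup N[T]])=|S|+|B|$. Hence $S\cup B\in\Psi(G)$ and $|T\cup A|=|S\cup B|$ at once.

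So the whole proof reduces to the following bound: every independent set $I\subseteq N[S]\cup N[T]$ satisfies $|I|\le |T|+|A|$. My first attempt is to split $I=(I\cap N[T])\cup Z$ with $Z=I\setminus N[T]\subseteq N[S]$. The first part has at most $|T|$ elements because $T\in\Psi(G)$. For $Z$, put $Y=S\cap N[T]=S\setminus A$. The set $Y\cup(Z\setminus N(Y))$ is independent and lies in $N[S]$, so $T$-free reasoning with $S\in\Psi(G)$ gives $|Z\setminus N(Y)|\le |A|$. What remains are the vertices of $Z\cap N(Y)$, which are not controlled by either local maximality alone.

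This is the main obstacle. I would handle it by an exchange argument that charges $Z\cap N(Y)$ against slack in $I\cap N[T]$. Each vertex $y\in Y$ lies in $N[T]$, so either $y\in T$ or $y$ has a neighbour in $T$; moreover a vertex $z\in Z\cap N(y)$ lies outside $N[T]$. The idea is to replace, inside $N[T]$, the vertices of $I\cap N[T]$ adjacent to $Y$ by $Y$ itself (or by suitable vertices of $T$). One would then compare the set $(I\cap N[T]\setminus N(Y))\cup Y$, which lies in $N[T]$, with $T$, and the set $Y\cup(Z\setminus N(Y))$ with $S$. Adding the two resulting inequalities should cancel the $|Y|$ terms and leave $|I|\le|T|+|A|$, once I show that $|Z\cap N(Y)|$ is dominated by $|Y|-|(I\cap N[T])\cap N(Y)|$ via a Hall-type matching from $Z\cap N(Y)$ into $Y$. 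If that matching fails, I would instead choose $I$ maximum with $|I\cap(S\cup T)|$ maximal and derive a contradiction from a minimal counterexample on the induced subgraph $G[N[S]\cup N[T]]$.

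For the stated structural consequences, the same splitting with $T$ replaced by an arbitrary $T'\in\Psi(G-N[S])$ gives the easy direction cleanly. If $I\subseteq N[S\cup T']$ is independent, then $|I\cap N[S]|\le|S|$. Also $I\setminus N[S]\subseteq N_{G-N[S]}[T']$, so $|I\setminus N[S]|\le|T'|$. Hence $S\cup T'\in\Psi(G)$, and this yields the map $T'\mapsto S\cup T'$.
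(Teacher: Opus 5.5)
\textbf{There is a genuine gap: the key bound $|Z|\le|A|$ is never proved, and the route you sketch for it fails.}

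Your reduction itself is sound. The identity $\alpha(G[N[S]\cup N[T]])=|T|+|A|=|S|+|B|$ is true, and it would give $T\cup A\in\Psi(G)$, $S\cup B\in\Psi(G)$ and $|T\cup A|=|S\cup B|$ in one stroke. The split $I=(I\cap N[T])\cup Z$ is also the right one.

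The trouble is the padding. You pad $Z$ with $Y=S\cap N[T]$, and adding your two inequalities then only gives $|I|\le|S|+|T|-2|Y|+|I\cap N[Y]|$. To reach $|T|+|A|$ you need $|I\cap N[Y]|\le|Y|$. That is a local-maximality property of the subset $Y\subseteq S$, and subsets of members of $\Psi(G)$ need not have it. In particular, your claimed domination $|Z\cap N(Y)|\le|Y|-|(I\cap N[T])\cap N(Y)|$ is false. Take the path $a-b-c-d$ with $S=\{a,c\}$, $T=\{d\}$ (both in $\Psi(G)$) and $I=\{b,d\}$. Then $Y=\{c\}$, $Z\cap N(Y)=\{b\}$ and $(I\cap N[T])\cap N(Y)=\{d\}$, so the claim reads $1\le 0$. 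The fallback via a minimal counterexample is only a hope, not an argument.

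\textbf{The missing idea} is to pad $Z$ with vertices of $T$ rather than of $S$. Since $Z\cap N[T]=\varnothing$, the set $Z$ is anticomplete to all of $T$. Hence $Z\cup(T\cap N[S])$ is independent in $G[N[S]]$, which gives $|Z|\le|S|-|S\cap T|-|T\cap N(S)|$. This is at most $|A|=|S|-|S\cap T|-|S\cap N(T)|$ exactly when $|S\cap N(T)|\le|T\cap N(S)|$. That inequality is the real content of the theorem, and it needs a matching argument.

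Apply Berge's lemma (Lemma~\ref{lem:matching}) in $G[N[T]]$, where $T$ is maximum and $S\cap N[T]$ is independent. It gives a matching from $S\cap N(T)$ into $T\setminus S$, and every image vertex has a neighbour in $S$, so it lies in $T\cap N(S)$. The symmetric argument gives the reverse inequality, which you need for the other half of your identity. Together these are the paper's Lemma~\ref{lem:cross}.

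The padding step is the paper's Lemma~\ref{lem:outside}. Its proof in fact bounds every independent subset of $N[T]\setminus N[S]$, not only of $N_H[T_0]$. Your identity needs precisely this stronger form, in its symmetric version for $N[S]\setminus N[T]$. Once this ingredient is supplied, your packaging is slightly more economical than the paper's: it yields membership and equal cardinality at once, instead of via the separate count in Lemma~\ref{lem:same-size}.
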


Equivalently, for every $S,T\in\Psi(G)$ there exist sets
\[
A\subseteq S \setminus T,\qquad B\subseteq T \setminus S
\]
such that $T\cup A\in\Psi(G)$, $S\cup B\in\Psi(G)$, and $|T\cup A|=|S\cup B|$. In fact, we show that one may always choose
\[
A=S \setminus N[T],\qquad B=T \setminus N[S].
\]
A second structural consequence, isolated in Proposition~\ref{prop:decomp}, is a closed-neighborhood decomposition: $\alpha(G)=|S|+\alpha(H)$ for every fixed $S\in\Psi(G)$, and writing $H:=G-N[S]$, the function
\[
T\longmapsto S\cup T
\]
induces a bijection from $\Psi(H)$ onto the members of $\Psi(G)$ that contain $S$.

Before proving Theorem~\ref{thm:main}, we include a brief visual guide to the general framework and a concrete example of the canonical augmentation.

\section{Examples and figures}

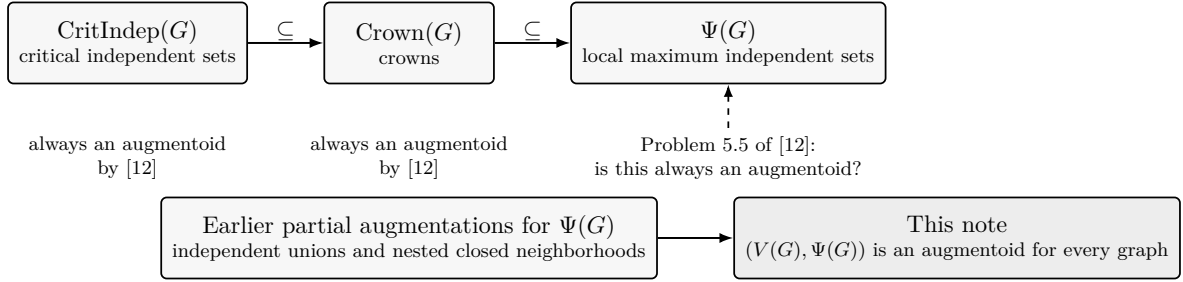
\begin{figure}[t]
\centering
\resizebox{0.97\textwidth}{!}{%
\begin{tikzpicture}[node distance=10mm and 12mm]
  \node[conceptbox, minimum width=3.0cm] (crit) {$\CritIndep(G)$\\[-1mm]\footnotesize critical independent sets};
  \node[conceptbox, right=of crit, minimum width=2.7cm] (crown) {$\Crown(G)$\\[-1mm]\footnotesize crowns};
  \node[conceptbox, right=of crown, minimum width=3.5cm] (psi) {$\Psi(G)$\\[-1mm]\footnotesize local maximum independent sets};

  \draw[conceptarrow] (crit) -- node[above, fill=white, inner sep=1pt] {$\subseteq$} (crown);
  \draw[conceptarrow] (crown) -- node[above, fill=white, inner sep=1pt] {$\subseteq$} (psi);

  \node[conceptnote, below=7mm of crit] (critnote) {always an augmentoid\\by~\cite{LM2022}};
  \node[conceptnote, below=7mm of crown] (crownnote) {always an augmentoid\\by~\cite{LM2022}};
  \node[conceptnote, below=7mm of psi] (psinote) {Problem~5.5 of~\cite{LM2022}:\\is this always an augmentoid?};

  \draw[dashed, -{Latex[length=2mm]}, line width=0.8pt] (psinote.north) -- (psi.south);

  \node[conceptbox, below=18mm of crown, minimum width=5.6cm] (partial)
    {Earlier partial augmentations for $\Psi(G)$\\[-1mm]
     \footnotesize independent unions and nested closed neighborhoods};
  \node[conceptbox, right=12mm of partial, fill=black!7, minimum width=4.0cm] (answer)
    {This note\\[-1mm]\footnotesize $(V(G),\Psi(G))$ is an augmentoid for every graph};
  \draw[conceptarrow] (partial) -- (answer);
\end{tikzpicture}%
}
\caption{A roadmap of the framework developed in~\cite{LM2022} and of the present result. The theorem proved below upgrades the two earlier augmentation mechanisms for $\Psi(G)$ to a full augmentoid statement valid for all graphs.}
\label{fig:roadmap}
\end{figure}

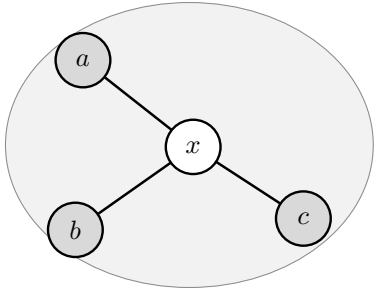
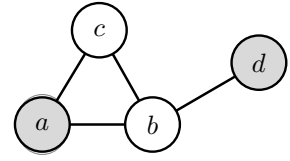
\begin{figure}[H]
\centering
\begin{subfigure}[t]{0.47\textwidth}
\centering
\begin{tikzpicture}[scale=1]
  \coordinate (x) at (0,0);
  \coordinate (a) at (-1.45,1.15);
  \coordinate (b) at (-1.55,-1.1);
  \coordinate (c) at (1.45,-0.95);
  \draw[edge] (x) -- (a);
  \draw[edge] (x) -- (b);
  \draw[edge] (x) -- (c);
  \begin{scope}[on background layer]
    \node[focus, fit=(a)(b)(c), ellipse, inner xsep=6pt, inner ysep=6pt] {};
  \end{scope}
  \node[vertex] at (x) {$x$};
  \node[sonly] at (a) {$a$};
  \node[sonly] at (b) {$b$};
  \node[sonly] at (c) {$c$};
\end{tikzpicture}
\caption{A star $K_{1,3}$.}
\label{fig:star}
\end{subfigure}\hfill
\begin{subfigure}[t]{0.47\textwidth}
\centering
\begin{tikzpicture}[scale=1]
  \coordinate (a) at (-1.3,0);
  \coordinate (b) at (0.15,0);
  \coordinate (c) at (-0.55,1.25);
  \coordinate (d) at (1.55,0.82);
  \draw[edge] (a) -- (b) -- (c) -- (a);
  \draw[edge] (b) -- (d);
  \begin{scope}[on background layer]
    \node[focus, fit=(a), ellipse, inner xsep=7pt, inner ysep=8pt] {};
  \end{scope}
  \node[sonly] at (a) {$a$};
  \node[vertex] at (b) {$b$};
  \node[vertex] at (c) {$c$};
  \node[sonly] at (d) {$d$};
\end{tikzpicture}
\caption{A triangle with a pendant vertex.}
\label{fig:triangle}
\end{subfigure}
\caption{Two small examples separating the families $\CritIndep(G)$, $\Crown(G)$, and $\Psi(G)$. The star from Figure~\ref{fig:star} witnesses the failure of the equality $\CritIndep(G)=\Crown(G)$, while the graph from Figure~\ref{fig:triangle} witnesses the failure of the equality $\Crown(G)=\Psi(G)$.}
\label{fig:separation}
\end{figure}

\begin{figure}[H]
\centering
\begin{subfigure}[t]{0.31\textwidth}
\centering
\begin{tikzpicture}[scale=0.96]
  \coordinate (a) at (-1.3,0);
  \coordinate (b) at (-2.55,0);
  \coordinate (c) at (0,0);
  \coordinate (d) at (0,1.45);
  \coordinate (e) at (1.35,1.45);
  \coordinate (f) at (-1.35,1.45);
  \draw[edge] (b)--(a)--(c);
  \draw[edge] (c)--(d);
  \draw[edge] (c)--(e);
  \draw[edge] (c)--(f);
  \node[tonly] at (b) {$b$};
  \node[sonly] at (a) {$a$};
  \node[vertex] at (c) {$c$};
  \node[bothset] at (d) {$d$};
  \node[sonly] at (e) {$e$};
  \node[tonly] at (f) {$f$};
\end{tikzpicture}
\caption{The pair $S,T$.}
\label{fig:aug-a}
\end{subfigure}\hfill
\begin{subfigure}[t]{0.31\textwidth}
\centering
\begin{tikzpicture}[scale=0.96]
  \coordinate (a) at (-1.3,0);
  \coordinate (b) at (-2.55,0);
  \coordinate (c) at (0,0);
  \coordinate (d) at (0,1.45);
  \coordinate (e) at (1.35,1.45);
  \coordinate (f) at (-1.35,1.45);
  \draw[edge] (b)--(a)--(c);
  \draw[edge] (c)--(d);
  \draw[edge] (c)--(e);
  \draw[edge] (c)--(f);
  \node[vertex] at (b) {$b$};
  \node[sonly] at (a) {$a$};
  \node[vertex] at (c) {$c$};
  \node[bothset] at (d) {$d$};
  \node[sonly] at (e) {$e$};
  \node[added] at (f) {$f$};
\end{tikzpicture}
\caption{$S^+=S\cup(T\setminus N[S])$.}
\label{fig:aug-b}
\end{subfigure}\hfill
\begin{subfigure}[t]{0.31\textwidth}
\centering
\begin{tikzpicture}[scale=0.96]
  \coordinate (a) at (-1.3,0);
  \coordinate (b) at (-2.55,0);
  \coordinate (c) at (0,0);
  \coordinate (d) at (0,1.45);
  \coordinate (e) at (1.35,1.45);
  \coordinate (f) at (-1.35,1.45);
  \draw[edge] (b)--(a)--(c);
  \draw[edge] (c)--(d);
  \draw[edge] (c)--(e);
  \draw[edge] (c)--(f);
  \node[tonly] at (b) {$b$};
  \node[vertex] at (a) {$a$};
  \node[vertex] at (c) {$c$};
  \node[bothset] at (d) {$d$};
  \node[added] at (e) {$e$};
  \node[tonly] at (f) {$f$};
\end{tikzpicture}
\caption{$T^+=T\cup(S - N[T])$.}
\label{fig:aug-c}
\end{subfigure}
\caption{A concrete augmentation for Theorem~\ref{thm:main}. In panel (a), gray vertices belong to $S - T$, double-circled vertices belong to $T - S$, and the black vertex belongs to $S\cap T$. Panels (b) and (c) show the two augmented local maximum independent sets produced by the theorem.}
\label{fig:augmentation}
\end{figure}
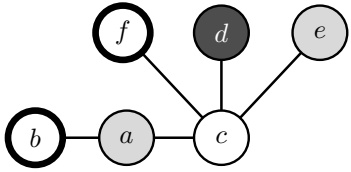
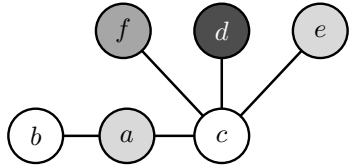
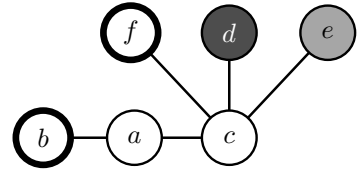

\begin{example}\label{ex:star}
Let $G$ be the star from Figure~\ref{fig:star}. Then $\{a,b,c\}$ is the unique critical independent set, because
\[
\diff(\{a,b,c\})=3-1=2
\]
and every proper independent subset of $\{a,b,c\}$ has smaller difference. Thus $d(G)=2$, and the characterization of Pereyra~\cite[Theorem~3.6]{Pereyra2026} yields
\[
\CritIndep(G)\neq\Crown(G).
\]
On the other hand, every subset of $\{a,b,c\}$ is a crown: its neighborhood is either empty or the singleton $\{x\}$, and hence can be matched into the set itself. Since every crown belongs to $\Psi(G)$ by~\cite{LM2022}, we obtain
\[
\CritIndep(G)\subsetneq\Crown(G)=\Psi(G).
\]
\end{example}

\begin{example}\label{ex:triangle}
Let $G$ be the graph from Figure~\ref{fig:triangle}. The singleton $\{a\}$ is a local maximum independent set, because $G[N[\{a\}]]$ is the triangle on $\{a,b,c\}$, and $\{a\}$ is a maximum independent set of that triangle. However, $\{a\}$ is not a crown: its neighborhood is $\{b,c\}$, which cannot be matched into a singleton. A direct inspection shows that
\[
\Crown(G)=\CritIndep(G)=\{\varnothing,\{d\},\{a,d\},\{c,d\}\},
\]
and each of these sets has difference $0$, so $d(G)=0$. Thus this graph witnesses that the left equality $\CritIndep(G)=\Crown(G)$ may hold while the second inclusion $\Crown(G)\subseteq\Psi(G)$ is still strict, even for a small connected non-bipartite K\H{o}nig--Egerv\'ary graph.
\end{example}

\begin{example}\label{ex:augmentation}
Consider the graph from Figure~\ref{fig:augmentation}. Let
\[
S=\{a,d,e\},\qquad T=\{b,d,f\}.
\]
Then $S,T\in\Psi(G)$. Indeed, $G[N[S]]$ has maximum independent sets $\{a,d,e\}$ and $\{b,d,e\}$, while $G[N[T]]$ has maximum independent sets $\{b,d,f\}$ and $\{a,d,f\}$. The canonical augmentation isolates the vertices outside the opposite closed neighborhoods:
\[
S - N[T]=\{e\},\qquad T - N[S]=\{f\}.
\]
Hence, Theorem~\ref{thm:main} produces
\[
S^+=S\cup(T - N[S])=\{a,d,e,f\}\in\Psi(G)
\]
and
\[
T^+=T\cup(S - N[T])=\{b,d,e,f\}\in\Psi(G),
\]
with $|S^+|=|T^+|=4$.
\end{example}

\begin{example}\label{ex:previous-results}
The theorem subsumes the two augmentation mechanisms from~\cite{LM2022}. If $S,T\in\Psi(G)$ are disjoint and $S\cup T$ is independent, then $S\cap N(T)=T\cap N(S)=\varnothing$, so the canonical choice gives
\[
S \setminus N[T]=S,\qquad T \setminus N[S]=T,
\]
and therefore $S\cup T\in\Psi(G)$, recovering~\cite[Proposition~3.20]{LM2022}. Likewise, if $N[S]\subseteq N[T]$, then $S \setminus N[T]=\varnothing$, so the theorem yields
\[
S\cup (T \setminus N[S])\in\Psi(G)
\quad\text{and}\quad
\bigl|S\cup (T \setminus N[S])\bigr|=|T|,
\]
which is precisely~\cite[Theorem~5.3 and Corollary~5.4]{LM2022}.
\end{example}

\section{Proof of Theorem~\ref{thm:main}}
\noindent We use the following classical lemma, often called Berge's Maximum Stable Set Lemma; see, for instance,~\cite[Theorem~1.2]{LM2008ig}. For the sake of completeness, we also include a short Hall-type proof.

\begin{lemma}\label{lem:matching}
Let $H$ be a graph, let $M$ be a maximum independent set of $H$, and let $I$ be any independent set of $H$. Then there exists a matching from $I \setminus M$ into $M \setminus I$.
\end{lemma}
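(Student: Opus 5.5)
The plan is to apply Hall's theorem to the bipartite graph $B$ with sides $I\setminus M$ and $M\setminus I$, where an edge of $B$ is an edge of $H$ joining the two sides. Both sets are independent, so every edge of $H$ between them is an edge of $B$. The claim is that $B$ has a matching saturating $I\setminus M$. By Hall's theorem this holds if and only if $|N_B(X)|\ge |X|$ for every $X\subseteq I\setminus M$. Note that $N_B(X)=N(X)\cap (M\setminus I)$, and because $X\subseteq I$ is independent and disjoint from $M\cap I$, we have $N(X)\cap M = N(X)\cap(M\setminus I)=N_B(X)$.

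To verify Hall's condition, I would argue by contradiction. Suppose some $X\subseteq I\setminus M$ satisfies $|N(X)\cap M|<|X|$. Set
\[
M' = (M\setminus N(X))\cup X .
\]
First I would show that $M'$ is independent. The set $M\setminus N(X)$ is independent, and so is $X$. No vertex of $X$ is adjacent to a vertex of $M\setminus N(X)$, by the definition of $N(X)$. Finally, $X\cap M=\varnothing$, so the union is disjoint. Then I would compute its size:
\[
|M'|=|M|-|N(X)\cap M|+|X|>|M| .
\]
This contradicts the maximality of $M$. Hence Hall's condition holds, and the matching exists.

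There is no serious obstacle here. The only point needing care is the identification $N_B(X)=N(X)\cap M$: we need that $X$ has no neighbours in $M\cap I$, which holds because $I$ is independent. With that in hand, the exchange step goes through directly.
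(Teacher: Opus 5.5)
Your proof is correct and follows essentially the same route as the paper. You apply Hall's theorem, and if some $X\subseteq I\setminus M$ violates Hall's condition, you use the independence of $I$ to see that $X$ has no neighbours in $M\cap I$, so that $(M\setminus N(X))\cup X$ is an independent set larger than $M$, which is a contradiction.
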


\begin{proof}
Assume that no such matching exists. By Hall's theorem, there is a set $X\subseteq I \setminus M$ such that
\[
|N_H(X)\cap(M \setminus I)|<|X|.
\]
Since $I$ is independent, no vertex of $X$ has a neighbor in $M\cap I$. Therefore $(M \setminus N_H(X))\cup X$ is an independent set of $H$, and
\[
|(M \setminus N_H(X))\cup X|
 =|M|-|N_H(X)\cap M|+|X|
 =|M|-|N_H(X)\cap(M \setminus I)|+|X|>|M|,
\]
a contradiction.
\end{proof}

\begin{lemma}\label{lem:cross}
If $S,T\in\Psi(G)$, then there exists a perfect matching between $S\cap N(T)$ and $T\cap N(S)$. In particular,
\[
|S\cap N(T)|=|T\cap N(S)|.
\]
\end{lemma}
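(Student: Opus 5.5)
The plan is to apply Lemma~\ref{lem:matching} twice, once inside $G[N[S]]$ and once inside $G[N[T]]$. Each application gives an injective matching in one direction. Comparing cardinalities then shows that either injection is already a perfect matching.

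\textbf{Step 1.} Put $H=G[N[S]]$. Since $S\in\Psi(G)$, the set $M=S$ is a maximum independent set of $H$. Take $I=T\cap N[S]$, which is independent in $H$ because $T$ is independent and $I\subseteq V(H)$. Then
\[
I\setminus M=(T\cap N[S])\setminus S=T\cap N(S),
\]
where the last equality uses that $T\cap N[S]$ splits as $(T\cap S)\cup(T\cap N(S))$. By Lemma~\ref{lem:matching} there is a matching $\mu_1$ from $T\cap N(S)$ into $S\setminus I=S\setminus T$.

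\textbf{Step 2.} Each vertex $t\in T\cap N(S)$ is matched by an edge of $H$ to some $s\in S$. Hence $s$ is adjacent to $t\in T$, so $s\in S\cap N(T)$. This also shows $s\notin T$, since $T$ is independent. Thus $\mu_1$ is an injection from $T\cap N(S)$ into $S\cap N(T)$ realized by edges of $G$, and so
\[
|T\cap N(S)|\le|S\cap N(T)|.
\]

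\textbf{Step 3.} Exchanging the roles of $S$ and $T$, using $T\in\Psi(G)$ and the graph $G[N[T]]$, gives an injective matching from $S\cap N(T)$ into $T\cap N(S)$. Hence $|S\cap N(T)|\le|T\cap N(S)|$, and the two sets have equal size. Since the two finite sets have equal size, the injection $\mu_1$ from Step~2 is a bijection. So $\mu_1$ is a perfect matching between $S\cap N(T)$ and $T\cap N(S)$, which proves both claims of the lemma.

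The only delicate point is Step~2. There one must verify that the partners supplied by Lemma~\ref{lem:matching} land in $S\cap N(T)$, and not merely in $S\setminus T$. This follows directly from the matching edges joining them to vertices of $T$. The remaining steps are bookkeeping with closed neighborhoods, and I expect no further obstacle.
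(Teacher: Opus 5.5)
Your proposal is correct and follows the paper's proof essentially step for step. You apply Lemma~\ref{lem:matching} in $G[N[S]]$ with $M=S$ and $I=T\cap N[S]$, note that the matched partners lie in $S\cap N(T)$, use the symmetric argument in $G[N[T]]$, and conclude by counting that the injection is a perfect matching.
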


\begin{proof}
Consider the induced subgraph $H_S=G[N[S]]$. Since $S\in\Psi(G)$, the set $S$ is a maximum independent set of $H_S$. The set $T\cap N[S]$ is an independent set of $H_S$, so by Lemma~\ref{lem:matching} there exists a matching from
\[
(T\cap N[S]) \setminus S=T\cap N(S)
\]
into
\[
S \setminus (T\cap N[S])=S \setminus T.
\]
Every vertex in the image is adjacent to a vertex of $T\cap N(S)$, hence belongs to $S\cap N(T)$. Thus there exists a matching from $T\cap N(S)$ into $S\cap N(T)$. By symmetry, there is also a matching from $S\cap N(T)$ into $T\cap N(S)$. The two finite sets therefore have the same cardinality, and either matching is perfect.
\end{proof}

Now fix $S,T\in\Psi(G)$ and define
\[
S_0:=S \setminus N[T],\qquad T_0:=T \setminus N[S],
\]
\[
S^+:=S\cup T_0,\qquad T^+:=T\cup S_0.
\]
Since $T_0\cap N[S]=\varnothing$ and $S_0\cap N[T]=\varnothing$, the sets $S^+$ and $T^+$ are independent.

\begin{lemma}\label{lem:outside}
First, $T_0\in\Psi(G-N[S])$. Second, symmetrically, $S_0\in\Psi(G-N[T])$.
\end{lemma}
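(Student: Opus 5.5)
The plan is to argue by contradiction, using Lemma~\ref{lem:cross} to trade the part of $T$ that meets $N(S)$ for the corresponding part of $S$. By symmetry it suffices to prove $T_0\in\Psi(G')$, where $G':=G-N[S]$.

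First the bookkeeping. $T_0=T\setminus N[S]$ is an independent subset of $V(G')$. Its closed neighborhood in $G'$ is $N_{G'}[T_0]=N[T_0]\setminus N[S]$. So we must show that $T_0$ is a maximum independent set of $G'[N[T_0]\setminus N[S]]$.

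Suppose not. Then there is an independent set $J\subseteq N[T_0]\setminus N[S]$ with $|J|>|T_0|$. The idea is to build from $J$ an independent set inside $N[T]$ that is larger than $T$, contradicting $T\in\Psi(G)$.

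Simply replacing $T_0$ by $J$ inside $T$ may fail, because $J$ can be adjacent to vertices of $T\cap N(S)$. Instead we use vertices of $S$, which $J$ avoids entirely. Put
\[
X:=(S\cap T)\cup(S\cap N(T))\cup J=(S\cap N[T])\cup J .
\]
We check three things.
\begin{enumerate}[label=(\roman*)]
\item $X$ is independent. Indeed, $S\cap N[T]\subseteq S$ is independent, $J$ is independent, and $J\cap N[S]=\varnothing$, so there are no edges between $J$ and $S$.
\item $X\subseteq N[T]$. This holds because $S\cap N[T]\subseteq N[T]$ and $J\subseteq N[T_0]\subseteq N[T]$.
\item $|X|>|T|$. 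The three pieces of $X$ are pairwise disjoint, since $J$ misses $N[S]\supseteq S$. The set $T$ is the disjoint union of $T\cap S$, $T\cap N(S)$ and $T_0$. Lemma~\ref{lem:cross} gives $|S\cap N(T)|=|T\cap N(S)|$. Hence
\[
|X|=|S\cap T|+|S\cap N(T)|+|J|>|S\cap T|+|T\cap N(S)|+|T_0|=|T|.
\]
\end{enumerate}

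Thus $X$ is an independent set of $G[N[T]]$ larger than $T$, contradicting $T\in\Psi(G)$. Therefore $T_0\in\Psi(G-N[S])$. Exchanging the roles of $S$ and $T$ gives $S_0\in\Psi(G-N[T])$.

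The main obstacle is step (i): we must avoid conflicts between $J$ and $T\cap N(S)$. The key choice is to replace $T\cap N(S)$ by $S\cap N(T)$, which avoids these conflicts, and Lemma~\ref{lem:cross} ensures this swap does not lose any cardinality.
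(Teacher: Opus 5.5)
Your proof is correct and follows the paper's argument. Like the paper, you take $(S\cap T)\cup(S\cap N(T))$ together with an independent set of $G'[N_{G'}[T_0]]$, observe that the union is an independent subset of $N[T]$, and use Lemma~\ref{lem:cross} to compare its size with $|T|$. The only difference is cosmetic: you argue by contradiction, while the paper derives $|R|\le|T_0|$ directly for an arbitrary such independent set $R$.
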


\begin{proof}
We prove the first statement; the second is symmetric. Set
\[
X:=S\cap N(T),\qquad Y:=T\cap N(S),\qquad C:=S\cap T.
\]
By Lemma~\ref{lem:cross}, we have $|X|=|Y|$.

Let $H:=G-N[S]$, and let $R$ be any independent set of $H[N_H[T_0]]$. Then $R\subseteq N[T_0] - N[S]\subseteq N[T] - N[S]$, so $R$ is anticomplete to $S$. Consequently,
\[
R\cup X\cup C
\]
is an independent set of $G[N[T]]$. Since $T\in\Psi(G)$, the set $T$ is a maximum independent set of $G[N[T]]$. Hence,
\[
|R|+|X|+|C|\le |T|=|T_0|+|Y|+|C|=|T_0|+|X|+|C|,
\]
and therefore $|R|\le |T_0|$. This proves that $T_0$ is a maximum independent set of $H[N_H[T_0]]$, i.e. $T_0\in\Psi(H)=\Psi(G-N[S])$.
\end{proof}

\begin{lemma}\label{lem:plus}
$S^+,T^+\in\Psi(G)$.
\end{lemma}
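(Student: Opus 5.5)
By symmetry it suffices to show $S^+=S\cup T_0\in\Psi(G)$. Since $T_0\cap N[S]=\varnothing$, the set $S^+$ is independent. The task is therefore to show that no independent set $R$ of $G[N[S^+]]$ has more than $|S|+|T_0|$ vertices. The plan is to split the closed neighborhood as
\[
N[S^+]=N[S]\ \dot\cup\ \bigl(N[T_0]\setminus N[S]\bigr).
\]
The second part is exactly $N_H[T_0]$ with $H:=G-N[S]$. Given $R$, write $R=R_1\cup R_2$, where $R_1:=R\cap N[S]$ and $R_2:=R\setminus N[S]\subseteq N_H[T_0]$.

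First, $R_1$ is an independent set of $G[N[S]]$, and $S\in\Psi(G)$, so $|R_1|\le|S|$. Second, $R_2$ is an independent set of $H[N_H[T_0]]$. By Lemma~\ref{lem:outside}, $T_0\in\Psi(H)$, so $T_0$ is a maximum independent set of $H[N_H[T_0]]$ and $|R_2|\le|T_0|$. Adding the two bounds gives
\[
|R|=|R_1|+|R_2|\le|S|+|T_0|=|S^+|,
\]
where the last equality uses that $S$ and $T_0$ are disjoint. Together with independence, this shows that $S^+$ is a maximum independent set of $G[N[S^+]]$, that is, $S^+\in\Psi(G)$. Exchanging the roles of $S$ and $T$, and using the second half of Lemma~\ref{lem:outside}, gives $T^+\in\Psi(G)$.

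The only delicate point is the decomposition of $N[S^+]$. One must check that $N[S^+]\setminus N[S]$ coincides with $N_H[T_0]$. A vertex outside $N[S]$ lies in $N[S^+]$ exactly when it lies in $N_G[T_0]$. Because it avoids $N[S]$, it lies in $V(H)$, and then membership in $N_G[T_0]$ is the same as membership in $N_H[T_0]$. Once this identification is in place, the bound splits additively across the two parts, with no interaction between $R_1$ and $R_2$ to control, because each part is bounded separately. The real difficulty is already absorbed in Lemma~\ref{lem:outside}, which relies on the cross-matching of Lemma~\ref{lem:cross}.
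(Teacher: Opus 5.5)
Your proposal is correct and follows essentially the same argument as the paper: you split an independent set of $G[N[S^+]]$ into its parts inside and outside $N[S]$, then bound the first part by $|S|$ using $S\in\Psi(G)$ and the second by $|T_0|$ using Lemma~\ref{lem:outside} in $H=G-N[S]$. Your explicit check that $N[S^+]\setminus N[S]=N_H[T_0]$ is the identification the paper uses, spelled out in more detail.
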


\begin{proof}
We prove that $S^+\in\Psi(G)$; the proof for $T^+$ is symmetric. Let $J$ be an independent set of $G[N[S^+]]$. Put
\[
J_1:=J\cap N[S],\qquad J_2:=J\setminus N[S].
\]
Then $J_1$ is an independent set of $G[N[S]]$, so $|J_1|\le |S|$ because $S\in\Psi(G)$.

Let $H:=G-N[S]$. Since $N[S^+]=N[S]\cup N[T_0]$, we have
\[
J_2\subseteq N[T_0] \setminus N[S]=N_H[T_0].
\]
Moreover, $J_2$ is an independent set of $H[N_H[T_0]]$. By Lemma~\ref{lem:outside}, the set $T_0$ belongs to $\Psi(H)$, hence $|J_2|\le |T_0|$. Therefore,
\[
|J|=|J_1|+|J_2|\le |S|+|T_0|=|S^+|.
\]
Thus, $S^+$ is a maximum independent set of $G[N[S^+]]$, and so $S^+\in\Psi(G)$.
\end{proof}

\begin{lemma}\label{lem:same-size}
$|S^+|=|T^+|$.
\end{lemma}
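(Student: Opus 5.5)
We reduce the equality to a count over the partition of $S\cup T$ that was already used in the proof of Lemma~\ref{lem:outside}. With
\[
X:=S\cap N(T),\qquad Y:=T\cap N(S),\qquad C:=S\cap T,
\]
every vertex of $S$ lies in exactly one of $C$, $X$, $S_0$. A vertex of $S$ is either in $T$, or adjacent to $T$ (hence in $N(T)$, and not in $T$), or outside $N[T]$. Since $S$ is independent, no vertex of $C$ lies in $N(T)$, so these three pieces are pairwise disjoint. Hence
\[
|S|=|C|+|X|+|S_0|,\qquad |T|=|C|+|Y|+|T_0|.
\]

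Next we compute the augmented sets. By construction $T_0\cap S=\varnothing$, because $T_0\cap N[S]=\varnothing$. Likewise $S_0\cap T=\varnothing$. Therefore
\[
|S^+|=|S|+|T_0|=|C|+|X|+|S_0|+|T_0|,
\]
\[
|T^+|=|T|+|S_0|=|C|+|Y|+|T_0|+|S_0|.
\]
The difference $|S^+|-|T^+|$ is exactly $|X|-|Y|$.

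The only nontrivial input is then Lemma~\ref{lem:cross}, which gives $|X|=|Y|$ through the perfect matching between $S\cap N(T)$ and $T\cap N(S)$. Substituting this finishes the proof.

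There is no real obstacle here. The one point needing care is the disjointness bookkeeping, in particular that $C\cap N(T)=\varnothing$ by independence of $T$, and that the added sets $T_0$ and $S_0$ do not meet $S$ and $T$ respectively. Once that is checked, the statement is a direct consequence of the cardinality equality in Lemma~\ref{lem:cross}.
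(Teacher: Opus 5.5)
Your proof is correct and follows the paper's argument: the same three-part disjoint decompositions of $S$ and $T$, the observation that $T_0$ misses $S$ and $S_0$ misses $T$, and Lemma~\ref{lem:cross} to equate $|S\cap N(T)|$ with $|T\cap N(S)|$. One small slip: in your first paragraph, the fact that $S\cap T$ misses $N(T)$ comes from the independence of $T$, not of $S$, as you correctly state at the end.
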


\begin{proof}
We have the disjoint decompositions
\[
S=(S \setminus N[T])\cup(S\cap N(T))\cup(S\cap T)
\]
and
\[
T=(T \setminus N[S])\cup(T\cap N(S))\cup(S\cap T).
\]
Therefore,
\[
|S^+|=|S|+|T \setminus N[S]|
      =|S \setminus N[T]|+|S\cap N(T)|+|S\cap T|+|T \setminus N[S]|
\]
and likewise
\[
|T^+|=|T|+|S \setminus N[T]|
      =|T \setminus N[S]|+|T\cap N(S)|+|S\cap T|+|S \setminus N[T]|.
\]
By Lemma~\ref{lem:cross}, the middle terms have the same cardinality, and so $|S^+|=|T^+|$.
\end{proof}

\begin{proof}[Proof of Theorem~\ref{thm:main}]
The family $\Psi(G)$ is nonempty because $\varnothing\in\Psi(G)$. Let $S,T\in\Psi(G)$. By Lemma~\ref{lem:plus},
\[
T\cup(S \setminus N[T])=T^+\in\Psi(G)
\]
and
\[
S\cup(T \setminus N[S])=S^+\in\Psi(G).
\]
Moreover,
\[
S \setminus N[T]\subseteq S \setminus T,\qquad T \setminus N[S]\subseteq T \setminus S,
\]
and Lemma~\ref{lem:same-size} gives $|T^+|=|S^+|$. Hence, the augmentation property holds for $\Psi(G)$ with the explicit choice
\[
A=S \setminus N[T],\qquad B=T \setminus N[S].
\]
Therefore, $(V(G),\Psi(G))$ is an augmentoid.
\end{proof}

\subsection*{A closed-neighborhood decomposition for local maximum independent sets}

The main theorem has a useful structural companion that will likely be more suitable for later recursive arguments.

\begin{proposition}[Closed-neighborhood decomposition]\label{prop:decomp}
Let $S\in\Psi(G)$ and $H:=G-N[S]$. Define
\[
\Psi_S(G):=\{U\in\Psi(G): S\subseteq U\},
\qquad
\Omega_S(G):=\{M\in\Omega(G): S\subseteq M\}.
\]
Then the following hold.
\begin{enumerate}[label=\textup{(\roman*)}, leftmargin=8mm]
\item $\alpha(G)=|S|+\alpha(H)$.

\item The map $\Phi_S:\Psi(H)\longrightarrow \Psi_S(G), \Phi_S(T)=S\cup T$ is a bijection.

\item $\Omega_S(G)=\{S\cup Q: Q\in\Omega(H)\}$. Equivalently, the maximum independent sets of $G$ containing $S$ are exactly the sets of the form $S\cup Q$ with $Q\in\Omega(H)$.

\item $\bigcap \Omega_S(G)=S\cup \core(H)\  \textup{and} \ \bigcup \Omega_S(G)=S\cup \corona(H).$

\end{enumerate}
\end{proposition}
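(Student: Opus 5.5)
The plan rests on one splitting inequality and on the construction already used in Lemma~\ref{lem:plus}.

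\emph{Splitting inequality.} For any independent set $J$ of $G$, the piece $J\cap N[S]$ is independent in $G[N[S]]$. Since $S\in\Psi(G)$, this gives $|J\cap N[S]|\le |S|$. The remaining piece $J\setminus N[S]$ is independent in $H$, so $|J\setminus N[S]|\le\alpha(H)$. Hence
\[
|J|\le |S|+\alpha(H).
\]
Conversely, if $Q\in\Omega(H)$, then $S\cup Q$ is independent: $Q$ avoids $N[S]$, so it has no neighbour in $S$. This gives the reverse inequality and proves (i). Equality in the bound forces both pieces to be extremal, that is, $|J\cap N[S]|=|S|$ and $J\setminus N[S]\in\Omega(H)$. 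Two facts then give (iii). First, $S\cup Q\in\Omega(G)$ for every $Q\in\Omega(H)$, by (i). Second, take $M\in\Omega(G)$ with $S\subseteq M$. Then $M\cap N[S]=S$, because $M$ is independent and contains $S$, and equality in the bound gives $M\setminus N[S]\in\Omega(H)$. Part (iv) follows at once from (iii) by taking intersections and unions over $Q\in\Omega(H)$, since $S$ is disjoint from $V(H)$.

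\emph{Well-definedness of $\Phi_S$ in (ii).} Take $T\in\Psi(H)$. I will repeat the argument of Lemma~\ref{lem:plus}, with the hypothesis $T_0\in\Psi(G-N[S])$ replaced by $T\in\Psi(H)$. Let $J$ be independent in $G[N[S\cup T]]$. Then $|J\cap N[S]|\le|S|$ as before. Also
\[
J\setminus N[S]\subseteq N[T]\setminus N[S]=N_H[T],
\]
and this piece is independent there, so $|J\setminus N[S]|\le |T|$. Hence $S\cup T\in\Psi_S(G)$. Injectivity is clear, since $T=\Phi_S(T)\setminus S$ because $T\cap S=\varnothing$.

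\emph{Surjectivity, the main obstacle.} Take $U\in\Psi_S(G)$ and set $T:=U\setminus S$. Independence of $U$ gives $T\cap N[S]=\varnothing$, so $T\subseteq V(H)$ and $U=S\cup T$. It remains to show $T\in\Psi(H)$. Let $R$ be independent in $H[N_H[T]]$. Then $R\subseteq N[T]\setminus N[S]$, so $R$ is anticomplete to $S$. Hence $R\cup S$ is an independent subset of $N[U]$. Since $U\in\Psi(G)$, this gives $|R|+|S|\le|U|=|S|+|T|$, and therefore $|R|\le|T|$.

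Alternatively, one can apply Lemma~\ref{lem:outside} with the pair $(S,U)$. Then $U\setminus N[S]=T$, so $T\in\Psi(G-N[S])$ directly. Either route closes (ii). I expect the only delicate point to be keeping track of the neighbourhood $N_H[\cdot]$ inside $H$ versus $N[\cdot]$ inside $G$; the identity $N_H[T]=N[T]\setminus N[S]$ for $T\subseteq V(H)$ handles it throughout.
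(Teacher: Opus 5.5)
Your proposal is correct and follows the paper's proof essentially step by step. Both arguments split an independent set along $N[S]$ to get (i) and (iii), and they use the same two directions for the bijection in (ii), with (iv) read off from (iii). Your alternative route to surjectivity, applying Lemma~\ref{lem:outside} to the pair $(S,U)$, is also valid, since $U\setminus N[S]=U\setminus S$.
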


\begin{proof}
Let $I$ be any independent set of $G$. Then $I\cap N[S]$ is an independent set of $G[N[S]]$, while $I \setminus N[S]$ is an independent set of $H$. Since $S\in\Psi(G)$, the set $S$ is a maximum independent set of $G[N[S]]$. Hence,
\[
|I|=|I\cap N[S]|+|I \setminus N[S]|\le |S|+\alpha(H).
\]
On the other hand, if $Q\in\Omega(H)$, then $S\cup Q$ is independent in $G$, so
\[
\alpha(G)\ge |S|+|Q|=|S|+\alpha(H).
\]
This proves \textup{(i)}.

To prove \textup{(ii)}, first let $T\in\Psi(H)$. Since $T\subseteq V(H)=V(G)\setminus N[S]$, the set $T$ is anticomplete to $S$, and therefore $S\cup T$ is independent in $G$. Let $J$ be an independent set of $G[N[S\cup T]]$, and put
\[
J_1:=J\cap N[S],\qquad J_2:=J \setminus N[S].
\]
Then $J_1$ is an independent set of $G[N[S]]$, and thus $|J_1|\le |S|$.
Also,
\[
J_2\subseteq N[S\cup T] \setminus N[S]=N_H[T],
\]
and $J_2$ is an independent set of $H[N_H[T]]$. Since $T\in\Psi(H)$, we get
$|J_2|\le |T|$. Therefore
\[
|J|=|J_1|+|J_2|\le |S|+|T|=|S\cup T|,
\]
which shows that $S\cup T\in\Psi(G)$.

Conversely, let $U\in\Psi_S(G)$, and write
\[
T:=U \setminus S.
\]
Since $U$ is independent and contains $S$, every vertex of $T$ lies outside $N[S]$, so $T\subseteq V(H)$. Let $R$ be an independent set of $H[N_H[T]]$. Then
\[
R\subseteq N_H[T]\subseteq N[U] \setminus N[S],
\]
and $S\cup R$ is an independent set of $G[N[U]]$. Since $U\in\Psi(G)$,
\[
|S|+|R|=|S\cup R|\le |U|=|S|+|T|,
\]
and therefore $|R|\le |T|$. Hence, $T\in\Psi(H)$. So the map $\Phi_S$ is bijective, with inverse
\[
\Psi_S(G)\longrightarrow \Psi(H),\qquad U\longmapsto U \setminus S.
\]

Finally, \textup{(iii)} follows from \textup{(i)}. If $Q\in\Omega(H)$, then $S\cup Q$ is independent in $G$, and by \textup{(i)}
\[
|S\cup Q|=|S|+\alpha(H)=\alpha(G),
\]
so $S\cup Q\in\Omega_S(G)$. Conversely, if $M\in\Omega_S(G)$, then $M=S\cup(M \setminus S)$ with $M \setminus S\subseteq V(H)$, and
\[
|M \setminus S|=\alpha(G)-|S|=\alpha(H)
\]
by \textup{(i)}. Thus $M \setminus S\in\Omega(H)$, and so
$M=S\cup Q$ for some $Q\in\Omega(H)$.

Part~\textup{(iv)} now follows immediately from \textup{(iii)}. Since $S$ is contained in every member of $\Omega_S(G)$,
\[
\Omega_S(G)=\{S\cup Q:Q\in\Omega(H)\}
\]
induces
\[
\bigcap \Omega_S(G)=S\cup \bigcap \Omega(H)=S\cup \core(H)
\]
and
\[
\bigcup \Omega_S(G)=S\cup \bigcup \Omega(H)=S\cup \corona(H).
\]
\end{proof}

\begin{corollary}[Counting formulas]\label{cor:counting}
Let $S\in\Psi(G)$ and let $H:=G-N[S]$. Then
\[
|\Psi_S(G)|=|\Psi(H)|
\qquad\textup{and}\qquad
|\Omega_S(G)|=|\Omega(H)|.
\]
\end{corollary}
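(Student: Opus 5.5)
The two counting identities will follow at once from Proposition~\ref{prop:decomp}, so the plan is simply to read off cardinalities from the bijections established there. No new combinatorial argument is needed; the only care required is to check that each correspondence really is a bijection between finite sets.

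For the first identity, I invoke part~(ii) of Proposition~\ref{prop:decomp}. It states that $\Phi_S:\Psi(H)\to\Psi_S(G)$, $T\mapsto S\cup T$, is a bijection, with inverse $U\mapsto U\setminus S$. Both families are finite, being subfamilies of $2^{V(G)}$, so a bijection between them immediately gives $|\Psi_S(G)|=|\Psi(H)|$.

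For the second identity, I use part~(iii), which gives $\Omega_S(G)=\{S\cup Q: Q\in\Omega(H)\}$. This says the map $Q\mapsto S\cup Q$ from $\Omega(H)$ onto $\Omega_S(G)$ is surjective, so it remains to check injectivity. Every $Q\in\Omega(H)$ satisfies $Q\subseteq V(H)=V(G)\setminus N[S]$, so $Q$ is disjoint from $S$. Hence $Q=(S\cup Q)\setminus S$ is recovered from its image, and the map is injective. Equivalently, $Q\mapsto S\cup Q$ is the restriction of $\Phi_S$ to $\Omega(H)$; it is injective on all of $2^{V(H)}$ for the same disjointness reason. This yields $|\Omega_S(G)|=|\Omega(H)|$.

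There is no genuine obstacle here. The only step that goes beyond citing the proposition is the injectivity check in the second identity, and it rests entirely on the disjointness of $V(H)$ from $S$.
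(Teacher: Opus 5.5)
Your proposal is correct and follows the paper's proof: you read both identities off parts (ii) and (iii) of Proposition~\ref{prop:decomp}. Your explicit injectivity check for $Q\mapsto S\cup Q$ (via $Q\cap S=\varnothing$) supplies a detail the paper leaves implicit.
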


\begin{proof}
The first equality is immediate from the bijection
\[
\Phi_S:\Psi(H)\longrightarrow \Psi_S(G),\qquad \Phi_S(T)=S\cup T,
\]
of Proposition~\ref{prop:decomp}\textup{(ii)}. The second equality follows from
\[
\Omega_S(G)=\{S\cup Q:Q\in\Omega(H)\}
\]
given by Proposition~\ref{prop:decomp}\textup{(iii)}.
\end{proof}

\section{Conclusion and open problems}
Theorem~\ref{thm:main} gives the strongest possible answer to Problem~5.5 of~\cite{LM2022}: for every graph, its family of all local maximum independent sets is an augmentoid. Together with Proposition~\ref{prop:decomp} and Corollary~\ref{cor:counting}, it shows that the new theorem does more than solve an isolated problem. It upgrades the partial augmentation results from~\cite[Proposition~3.20, Theorem~5.3, Corollary~5.4]{LM2022}, extends the decomposition identity of~\cite[Lemma~3.14]{LM2022} from $\CritIndep(G)$ to the whole family $\Psi(G)$, and yields an explicit description of all local maximum independent sets and all maximum independent sets containing a prescribed local maximum independent set, together with their relative core/corona structures and their cardinalities.

For the comparison with earlier work, it is important to separate precisely what was already proved in~\cite{LM2022} from what was added later. The 2022 paper already characterized the equality $\Crown(G)=\Psi(G)$ via K\H{o}nig--Egerv\'ary neighborhoods and the triple equality $\CritIndep(G)=\Crown(G)=\Psi(G)$ via K\H{o}nig--Egerv\'ary neighborhoods with perfect matchings~\cite[Proposition~4.1 and Theorem~4.13]{LM2022}. It also proved several special cases, including $\Crown(G)=\Psi(G)$ for bipartite and for very well-covered graphs, $\CritIndep(G)=\Crown(G)$ for K\H{o}nig--Egerv\'ary graphs with a perfect matching, the bipartite criterion $\CritIndep(G)=\Crown(G)=\Psi(G)$ if and only if $G$ has a perfect matching, and the corresponding equivalences for trees of order at least two~\cite[Corollary~4.2, Proposition~4.4, Proposition~4.7, Corollary~4.9, Corollary~4.12]{LM2022}. In its conclusion,~\cite{LM2022} also observed the implication
\[
\CritIndep(G)=\Crown(G)\Longrightarrow d(G)=0,
\]
and posed as Problem~5.3 the task of characterizing the graphs satisfying $\CritIndep(G)=\Crown(G)$. Pereyra's recent work~\cite[Theorem~3.6]{Pereyra2026} resolves exactly that open problem by proving the converse implication, and thus the equivalence $\CritIndep(G)=\Crown(G)$ if and only if $d(G)=0$; his paper also gives additional reformulations of the already-characterized equalities $\Crown(G)=\Psi(G)$ and $\CritIndep(G)=\Crown(G)=\Psi(G)$~\cite[Theorems~3.13 and~3.14]{Pereyra2026}. Hence the present note should be viewed as completing a different part of the 2022 program, namely Problem~5.5.

From the structural point of view, Proposition~\ref{prop:decomp} suggests that local maximum independent sets admit a recursive closed-neighborhood decomposition analogous in spirit to the critical-independence decomposition of Larson~\cite{Larson2011}. This viewpoint also aligns with the recent open problems of Pereyra~\cite[Problems~4.3 and~4.4]{Pereyra2026}, who asked for descriptions of $\Psi(G)$ and $\Crown(G)$ in terms of the Larson decomposition. This motivates the following directions.

\begin{problem}
Develop a recursive ``$\Psi$-decomposition'' theory. For a chosen $S\in\Psi(G)$, Proposition~\ref{prop:decomp} splits the independence number as
\[
\alpha(G)=|S|+\alpha(G-N[S]).
\]
Study how repeated use of this identity interacts with accessibility, interval-greedoid structure, and classes of graphs where the successive local pieces are unique or canonical.
\end{problem}

\begin{problem}
Generalize the decomposition behind Proposition~\ref{prop:decomp} to the level of augmentoids. Which augmentoids admit a meaningful notion of ``closed neighborhood'' or ``deletable block'' for which a feasible set $X$ induces a rank decomposition analogous to
\[
r(E)=|X|+r(E - C(X))?
\]
A satisfactory answer could provide a more general counterpart of Larson-type decompositions in the augmentoid setting.
\end{problem}

\begin{problem}
Characterize the graphs for which the canonical augmentation of Theorem~\ref{thm:main} can be iterated along deletions to produce accessibility chains for all members of $\Psi(G)$. By~\cite{LM2008ig,LM2012}, this is precisely the threshold at which the universal augmentoid structure of $\Psi(G)$ may strengthen to an interval greedoid.
\end{problem}

\begin{problem}
Develop a weighted version of the theorem. If one calls an independent set $S$ weighted-local maximum, when it has maximum weight in $G[N[S]]$, does an augmentoid-type exchange survive for the resulting weighted family? Even partial positive results would connect the present theorem back to the optimization flavor of the Nemhauser--Trotter theorem.
\end{problem}

\begin{problem}
Find an operator-theoretic model for augmentoids that captures the families $\CritIndep(G)$, $\Crown(G)$, and $\Psi(G)$ on a common footing. The recent bridge between greedoids and violator spaces in~\cite{KL2024} suggests seeking a weaker closure or violator-type representation for the augmentoid setting.
\end{problem}

\begin{problem}
Exploit the canonical augmentation algorithmically. For instance, can the rule
\[
(S,T)\longmapsto \bigl(S\cup(T - N[S]),\ T\cup(S - N[T])\bigr)
\]
be used to enumerate local maximum independent sets more efficiently, or to extend a prescribed $S\in\Psi(G)$ to a maximum independent set while preserving local optimality certificates along the way?
\end{problem}


\begin{thebibliography}{99}\small

\bibitem{KL2024}
Y.~Kempner and V.~E.~Levit,
\emph{Greedoids and violator spaces},
\emph{Axioms} \textbf{13} (2024), no.~9, Article~633.

\bibitem{Larson2007}
C.~E.~Larson,
\emph{A note on critical independence reductions},
\emph{Bulletin of the Institute of Combinatorics and its Applications} \textbf{51} (2007), 34--46.

\bibitem{Larson2011}
C.~E.~Larson,
\emph{The critical independence number and an independence decomposition},
\emph{European Journal of Combinatorics} \textbf{32} (2011), no.~2, 294--300.

\bibitem{LM2002}
V.~E.~Levit and E.~Mandrescu,
\emph{A new greedoid: the family of local maximum stable sets of a forest},
\emph{Discrete Applied Mathematics} \textbf{124} (2002), no.~1--3, 91--101.

\bibitem{LM2003}
V.~E.~Levit and E.~Mandrescu,
\emph{Local maximum stable sets in bipartite graphs with uniquely restricted maximum matchings},
\emph{Discrete Applied Mathematics} \textbf{132} (2003), no.~1--3, 163--174.

\bibitem{LM2007}
V.~E.~Levit and E.~Mandrescu,
\emph{Triangle-free graphs with uniquely restricted maximum matchings and their corresponding greedoids},
\emph{Discrete Applied Mathematics} \textbf{155} (2007), no.~18, 2414--2425.

\bibitem{LM2008wc}
V.~E.~Levit and E.~Mandrescu,
\emph{Well-covered graphs and greedoids},
in J.~Harland and P.~Manyem (eds.), \emph{Theory of Computing 2008: Proceedings of the Fourteenth Computing: The Australasian Theory Symposium (CATS 2008)},
Conferences in Research and Practice in Information Technology, vol.~77, ACS, 2008, pp.~89--94.

\bibitem{LM2008ig}
V.~E.~Levit and E.~Mandrescu,
\emph{Interval greedoids and families of local maximum stable sets of graphs},
arXiv:0811.4089, 2008.

\bibitem{LM2010}
V.~E.~Levit and E.~Mandrescu,
\emph{Graph operations that are good for greedoids},
\emph{Discrete Applied Mathematics} \textbf{158} (2010), no.~13, 1418--1423.

\bibitem{LM2011}
V.~E.~Levit and E.~Mandrescu,
\emph{Very well-covered graphs of girth at least four and local maximum stable set greedoids},
\emph{Discrete Mathematics, Algorithms and Applications} \textbf{3} (2011), no.~2, 245--252.

\bibitem{LM2012}
V.~E.~Levit and E.~Mandrescu,
\emph{On local maximum stable set greedoids},
\emph{Discrete Mathematics} \textbf{312} (2012), no.~3, 588--596.

\bibitem{LM2022}
V.~E.~Levit and E.~Mandrescu,
\emph{Critical sets, crowns and local maximum independent sets},
\emph{Journal of Global Optimization} \textbf{83} (2022), no.~3, 481--495.

\bibitem{NT1975}
G.~L.~Nemhauser and L.~E.~Trotter, Jr.,
\emph{Vertex packings: structural properties and algorithms},
\emph{Mathematical Programming} \textbf{8} (1975), 232--248.

\bibitem{Pereyra2026}
K.~Pereyra,
\emph{On Graphs with $\CritIndep(G)=\Crown(G)$},
HAL preprint hal-05557006v1, 2026.

\end{thebibliography}
\end{document}